\newcommand{\vfi}{\varphi}
\newtheorem{theorem}{\sc Theorem}[section]
\newtheorem{lem}[theorem]{\sc Lemma}
\newtheorem{prop}[theorem]{\sc Proposition}
\newtheorem{cor}[theorem]{\sc Corollary}
\newtheorem{rem}[theorem]{\sc Remark}
\newtheorem*{thmA}{Theorem A}
\newtheorem*{thmB}{Theorem B}
\newtheorem*{thmC}{Theorem C}
\title[Non-abelian tensor product]{Finiteness conditions for the non-abelian tensor product of groups}
\author[Bastos]{R. Bastos}
\address{ Departamento de Matem\'atica, Universidade de Bras\'ilia,
Brasilia-DF, 70910-900 Brazil }
\email{bastos@mat.unb.br}
\author[Nakaoka]{I.\,N. Nakaoka}
\address{ Departamento de Matem\'atica, Universidade Estadual de Maring\'a, Maring\'a-PR, 87020-900 Brazil }
\email{innakaoka@uem.br}
\author[Rocco]{N.\,R. Rocco }
\address{ Departamento de Matem\'atica, Universidade de Bras\'ilia,
Brasilia-DF, 70910-900 Brazil }
\email{norai@unb.br}
\subjclass[2010]{20E25, 20F50, 20J06}
\keywords{Local properties; Locally finite groups; Non-abelian tensor product of groups}
\begin{document}

\maketitle

\begin{abstract}
Let $G$, $H$ be groups. We denote by $\eta(G,H)$ a certain extension of the non-abelian tensor product $G \otimes H$ by $G \times H$. We prove that if $G$ and $H$ are groups that act compatibly on each other and  such that the set of all tensors $T_{\otimes}(G,H)=\{g\otimes h \, : \, g \in G, \, h\in H\}$ is finite, then the non-abelian tensor product $G \otimes H$ is finite. In the opposite direction we examine certain finiteness conditions of $G$ in terms of similar conditions for the tensor square $G \otimes G$.
\end{abstract}

\maketitle

\section{Introduction}

Let $G$ and $H$ be groups each of which acts upon the other (on the right),
\[
G\times H \rightarrow G, \; (g,h) \mapsto g^h; \; \; H\times G \rightarrow
H, \; (h,g) \mapsto h^g
\]
and on itself by conjugation, in such a way that for all $g,g_1 \in G$ and
$h,h_1 \in H$,
\begin{equation}   \label{eq:0}
g^{\left( h^{g_1} \right) } = \left( \left( g^{g^{-1}_1}  \right) ^h \right) ^{g_1} \; \; \mbox{and} \; \; h^{\left( g^{h_1}\right) } =
\left( \left( h^{h_1^{-1}} \right) ^g \right) ^{h_1}.
\end{equation}
In this situation we say that $G$ and $H$ act {\em compatibly} on each other. Let  $H^{\varphi}$ be
an extra copy of $H$, isomorphic via $\varphi : H \rightarrow
H^{\varphi}, \; h \mapsto h^{\varphi}$, for all $h\in H$.
Consider the group $\eta(G,H)$ defined in  \cite{Nak} as
$$\begin{array}{ll} {\eta}(G,H) =  \langle
G,H^{\varphi}\ |  &
[g,{h}^{\varphi}]^{g_1}=[{g}^{g_1},({h}^{g_1})^{\varphi}], \;
[g,{h}^{\varphi}]^{h^{\varphi}_1} = [{g}^{h_1},
({h}^{h_1})^{\varphi}] , \\ & \ \forall g,g_1 \in G, \; h, h_1 \in H
\rangle . \end{array}$$
We observe that when $G=H$ and all actions are conjugations, $\eta (G,H)$ becomes the group $\nu (G)$ introduced in \cite{NR1}.

It is a well known fact (see \cite[Proposition 2.2]{Nak}) that the subgroup
$[G, H^{\varphi}]$ of $\eta(G,H)$ is canonically isomorphic with the {\em non-abelian
tensor product} $G \otimes H$, as defined by R. Brown and J.-L. Loday in their seminal paper \cite{BL}, the isomorphism being induced by $g \otimes h \mapsto
[g, h^{\varphi}]$ (see also Ellis and Leonard \cite{EL}). It is clear that the subgroup $[G,H^{\varphi}]$ is normal in $\eta(G,H)$ and one has the decomposition
\begin{equation} \label{eq:decomposition}
 \eta(G,H) = \left ( [G, H^{\varphi}] \cdot G \right ) \cdot H^{\varphi},
\end{equation}
where the dots mean (internal) semidirect products. An element $\alpha \in \eta(G,H)$ is called a {\em tensor} if $\alpha = [a,b^{\varphi}]$ for suitable $a\in G$ and $b\in H$. If $N$ and $K$ are subgroups of $G$ and $H$, respectively, let $T_{\otimes}(N,K)$  denote the set of all tensors $[a,b^{\varphi}]$ with $a\in N$ and $b\in K$. In particular, $[N,K^{\varphi}] = \langle  T_{\otimes}(N,K)  \rangle$. When $G=H$ and all actions are by conjugation, we simply write $T_{\otimes}(G)$ instead of $T_{\otimes}(G,G)$. Moreover, $[G,G^{\varphi}]$ denotes the non-abelian tensor square $G \otimes G$. We denote by $\Delta(G)$ the diagonal subgroup of $[G,G^{\varphi}]$, $\Delta(G)= \langle [g,g^{\varphi}] \mid \ g \in G \rangle$.

In the present paper we want to study the following question: If we assume certain restrictions on the set $T_{\otimes}(G,H)$, how does this influence in the structure of the groups $G \otimes H$ or $\eta(G,H)$?

In \cite{Ros} Rosenlicht proved that if $N$ and $K$ are subgroups of a group $M$, with $N$ normal in $M$, and if the set of commutators $\{[n,k] \, :\, n\in N, \; k\in K\}$ is finite, then so is the commutator subgroup $[N,K]$. Under appropriate conditions we can extend this result to the subgroup $[N,K^{\varphi}]$ of $\eta(G,H)$.

\begin{thmA}
Let $G$ and $H$ be groups that act compatibly on each other and suppose that $N$ and $K$ are subgroups of $G$ and $H$, respectively, such that $N$ is $K$-invariant and $K$ is $N$-invariant. If the set $T_{\otimes}(N,K)$ is finite, then so is the subgroup $[N,K^{\varphi}]$ of $\eta (G,H)$. In particular, if $T_{\otimes}(G,H)$ is finite, then so is the non-abelian tensor product $G \otimes H$.
\end{thmA}

An immediate consequence of the above theorem is a well-known result due to Ellis \cite{Ellis} (see also \cite{T}), that $G \otimes H$ is finite when $G$ and $H$ are finite groups. In the opposite direction one could be interested in studying conditions under which the finiteness of the $G \otimes H$ implies that of $G$ and $H$; in general, the finiteness of $G \otimes H$ does not implies the finiteness of the groups involved. An easy counter-example is provided just by taking $G= C_2 \times C_{\infty}$, $H=C_2$ and supposing that all actions are trivial; then the non-abelian tensor product $G \otimes H \cong G^{ab} \otimes_{\mathbb{Z}} H^{ab}$ is finite (see \cite{BL} for details), but $G$ contains elements of infinite order. The question is more interesting when $G=H$ and all actions are conjugations, although it is also well-known that, in general, the finiteness of the non-abelian tensor square $[G,G^{\varphi}]$ does not imply that $G$ is a finite group (see Remark \ref{prufer}, below). However, in \cite{NP} Parvizi and Niroomand proved that if $G$ is a finitely generated group in which the non-abelian tensor square is finite, then $G$ is finite.

In the sequel we consider certain finiteness conditions for the group $G$ in terms of the torsion elements of the non-abelian tensor square $[G,G^{\varphi}]$. We establish the following related result, which is also related to one due to Moravec \cite{M} who proved that if $G$ is a locally finite group, then so are the groups $[G,G^{\varphi}]$ and $\nu(G)$.

\begin{thmB} \label{thm.fg}
Let $G$ be a group with finitely generated abelianization. The following properties are equivalent:
\begin{itemize}
\item[$(a)$] $G$ is locally finite;
\item[$(b)$] The non-abelian tensor square $[G,G^{\varphi}]$ is locally finite;
\item[$(c)$] The derived subgroup $G'$ is locally finite and the diagonal subgroup $\Delta(G)$ is periodic.
\end{itemize}
\end{thmB}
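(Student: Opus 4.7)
My plan is to establish the cyclic chain $(a) \Rightarrow (b) \Rightarrow (c) \Rightarrow (a)$, with the third implication being the only one that requires the hypothesis on $G^{ab}$.

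For $(a) \Rightarrow (b)$ I would simply invoke Moravec's theorem \cite{M}, which gives local finiteness of $\nu(G)$ (and hence of its subgroup $[G, G^{\varphi}]$) whenever $G$ is locally finite; no restriction on $G^{ab}$ is needed here. For $(b) \Rightarrow (c)$, the inclusion $\Delta(G) \leq [G, G^{\varphi}]$ shows that $\Delta(G)$ is locally finite and in particular periodic. Moreover, the projection $\eta(G, G) \to G$ that is the identity on $G$ and sends $h^{\varphi} \mapsto h$ restricts to a surjection $[G, G^{\varphi}] \twoheadrightarrow G'$ with $[g, h^{\varphi}] \mapsto [g, h]$, so $G'$ is a homomorphic image of $[G, G^{\varphi}]$ and is therefore locally finite.

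The core of the argument is $(c) \Rightarrow (a)$. My plan is to first show that $G^{ab}$ is finite; the conclusion then follows from the standard fact that extensions of a locally finite group by a finite group are locally finite, applied to $1 \to G' \to G \to G^{ab} \to 1$. Since $G^{ab}$ is finitely generated abelian, we have $G^{ab} \cong \mathbb{Z}^n \oplus T$ with $T$ finite, and it suffices to show $n = 0$. The key tool is the homomorphism
\[
\mu : [G, G^{\varphi}] \longrightarrow G^{ab} \otimes_{\mathbb{Z}} G^{ab}, \qquad [g, h^{\varphi}] \longmapsto \bar{g} \otimes \bar{h},
\]
obtained by applying the functoriality of the non-abelian tensor product to the abelianization $G \to G^{ab}$; this is well defined because the conjugation actions descend to the trivial action on $G^{ab}$, so the defining relations of $\eta(G, G)$ collapse to ordinary $\mathbb{Z}$-bilinearity. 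Assuming for contradiction that $n \geq 1$, I would pick $g \in G$ with $\bar g \in G^{ab}$ of infinite order; writing $\bar g = (a_1, \ldots, a_n, t)$ with some $a_i \neq 0$, the image of $\bar g \otimes \bar g$ in the torsion-free quotient $\mathbb{Z}^n \otimes_{\mathbb{Z}} \mathbb{Z}^n \cong \mathbb{Z}^{n^2}$ has nonzero $(i,i)$-entry $a_i^2$, so $\bar g \otimes \bar g$ has infinite order. Consequently $[g, g^{\varphi}] \in \Delta(G)$ would have infinite order, contradicting $(c)$; hence $n = 0$.

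No single step is especially deep, but the $(c) \Rightarrow (a)$ direction uses all three hypotheses crucially: finite generation of $G^{ab}$ is what lets us detect infinite-order elements inside $G^{ab} \otimes_{\mathbb{Z}} G^{ab}$, periodicity of $\Delta(G)$ supplies the contradiction, and local finiteness of $G'$ bridges from \textit{$G^{ab}$ finite} to \textit{$G$ locally finite}. The only genuine technical point is the construction of $\mu$, which is routine once one checks that the abelianization map is compatible with the trivial induced action on $G^{ab}$.
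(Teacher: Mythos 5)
Your proposal is correct and follows essentially the same route as the paper: the same cycle $(a)\Rightarrow(b)\Rightarrow(c)\Rightarrow(a)$, with Moravec's theorem for the first implication, the quotient $[G,G^{\varphi}]/\mu(G)\cong G'$ together with $\Delta(G)\leqslant[G,G^{\varphi}]$ for the second, and finiteness of $G^{ab}$ plus Schmidt's theorem for the third. The only difference is in how you deduce that $G^{ab}$ is finite from periodicity of $\Delta(G)$: the paper cites the decomposition of $\Delta(G^{ab})$ from \cite[Remark 5]{NR2}, whereas you pass to $G^{ab}\otimes_{\mathbb{Z}}G^{ab}$ via functoriality and check directly that $\bar g\otimes\bar g$ has infinite order whenever $\bar g$ does; this is a self-contained substitute for the same step (just avoid reusing the symbol $\mu$, which the paper reserves for the kernel of $\rho'$).
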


In the same paper \cite{NP} Parvizi and Niroomand showed that if $G$ is a group with finitely generated abelianization and the non-abelian tensor square $[G,G^{\varphi}]$ is a $p$-group, then $G$ is a $p$-group. We extend this result to $\pi$-groups, where $\pi$ is a set of primes.

\begin{thmC} \label{thm.pi}
Let $\pi$ be a set of primes and $G$ a group with finitely generated abelianization. Suppose that the non-abelian tensor square $[G,G^{\varphi}]$ is a $\pi$-group. Then $G$ is a $\pi$-group.
\end{thmC}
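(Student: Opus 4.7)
The plan is to extend the argument of Parvizi--Niroomand (the $p$-group case) to an arbitrary set of primes $\pi$ by treating the commutator subgroup $G'$ and the abelianization $G^{ab}$ separately, each via a suitable quotient of $[G,G^{\varphi}]$, and then combining the two pieces.

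First I would use the standard surjective homomorphism $\kappa:[G,G^{\varphi}]\twoheadrightarrow G'$ determined by $[g,h^{\varphi}]\mapsto[g,h]$ (which is well defined because the defining relations of $\eta(G,G)$ specialize to the identities that $[g,h]$ satisfies in $G'$). Since $[G,G^{\varphi}]$ is assumed to be a $\pi$-group, $G'$ is a $\pi$-group.

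Next I would construct a surjective homomorphism $\pi_{\mathrm{ab}}:[G,G^{\varphi}]\twoheadrightarrow G^{ab}\otimes_{\mathbb{Z}}G^{ab}$ sending $[g,h^{\varphi}]\mapsto\bar g\otimes\bar h$. This uses that the conjugation actions of $G$ on itself factor through the trivial actions of $G^{ab}$ on itself, and the latter are compatible in the sense of (\ref{eq:0}); the two mixed-action relations of $\eta(G,G)$ then collapse to $\mathbb{Z}$-bilinearity, so $\pi_{\mathrm{ab}}$ is well defined. Consequently, $G^{ab}\otimes_{\mathbb{Z}}G^{ab}$ is a $\pi$-group. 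Since $G^{ab}$ is finitely generated, I may write $G^{ab}\cong\mathbb{Z}^{r}\oplus T$ with $T$ a finite abelian group. The tensor product decomposes as a direct sum of pieces including $\mathbb{Z}^{r}\otimes\mathbb{Z}^{r}\cong\mathbb{Z}^{r^{2}}$, which is torsion-free; this forces $r=0$, otherwise $G^{ab}\otimes_{\mathbb{Z}}G^{ab}$ would contain elements of infinite order. Decomposing $T=\bigoplus_{q}T_{q}$ into its Sylow subgroups and using $\mathbb{Z}/m\otimes\mathbb{Z}/n\cong\mathbb{Z}/\gcd(m,n)$, each non-trivial $T_{q}$ contributes a non-trivial $q$-subgroup $T_{q}\otimes T_{q}$ to $G^{ab}\otimes_{\mathbb{Z}}G^{ab}$. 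Hence $T_{q}=0$ for every $q\notin\pi$, so $T$, and therefore $G^{ab}$, is a $\pi$-group.

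Finally, from the short exact sequence $1\to G'\to G\to G^{ab}\to 1$ with both quotient and kernel $\pi$-groups, $G$ itself is a $\pi$-group, since an extension of a $\pi$-group by a $\pi$-group is again a $\pi$-group. The main technical point I expect is verifying well-definedness of $\pi_{\mathrm{ab}}$: the two compatibility relations of $\eta(G,G)$ visibly mix the conjugation actions between the two factors, and one must confirm that after passing to $G^{ab}$ these indeed reduce to the bilinearity relations of the ordinary tensor product. Once this is in hand, everything else is an application of the structure theorem for finitely generated abelian groups together with the classical tensor products of cyclic groups.
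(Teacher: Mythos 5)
Your argument is correct, and it reaches the conclusion by a route that is genuinely different from (though parallel to) the one in the paper. The paper splits the problem the same way at the top level ($G'$ via the quotient $[G,G^{\varphi}]/\mu(G)$, then $G^{ab}$), but for the abelianization it works with the \emph{subgroup} $\Delta(G)\leqslant [G,G^{\varphi}]$: it first invokes Proposition \ref{prop.diagonal} to get $G^{ab}$ finite, then uses Rocco's explicit formula $\Delta(G^{ab})\cong\prod_i C_{n_i}\times\prod_{j<k}C_{\gcd(n_j,n_k)}$ together with the epimorphism $\Delta(G)\twoheadrightarrow\Delta(G^{ab})$ to conclude $\pi(G^{ab})=\pi(\Delta(G^{ab}))\subseteq\pi$. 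You instead work with a \emph{quotient}: the epimorphism $[G,G^{\varphi}]\twoheadrightarrow G^{ab}\otimes_{\mathbb{Z}}G^{ab}$ induced by $G\twoheadrightarrow G^{ab}$ (functoriality of the non-abelian tensor product plus the Brown--Loday identification for trivial actions, which the paper itself uses elsewhere), and then read off everything from the classical computation of tensor products of finitely generated abelian groups: the torsion-free summand $\mathbb{Z}^{r^2}$ kills the free rank, and $T_q\otimes T_q\neq 0$ detects every prime of the torsion part. This makes your proof more self-contained --- it absorbs the role of Proposition \ref{prop.diagonal} and avoids the citation of the structure of $\Delta$ of a finite abelian group --- at the cost of having to verify well-definedness of the induced map (which is standard and which you correctly flag and justify). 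The final step, that an extension of a $\pi$-group by a $\pi$-group is a $\pi$-group, is the same implicit step the paper takes when it says ``the result follows.'' I see no gap.
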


In view of Theorem C one might suspect that similar phenomenon holds for an arbitrary non-abelian tensor product $G \otimes H$. However, the same counter-example given before, by taking $G= C_2 \times C_{\infty}$, $H=C_2$ and supposing that all actions are trivial, shows that $G \otimes H \cong G^{ab} \otimes_{\mathbb{Z}} H^{ab}$ is finite, but $G$ contains elements of infinite order. \\

In Section 4 we obtain some local finiteness criteria related to (locally) residually finite  groups $G$ and their respective non-abelian tensor squares $[G,G^{\varphi}]$, in terms of the set of tensors $T_{\otimes}(G)$.

\section{Preliminary results}

Note that there is an epimorphism $\rho: \nu(G) \to G$, given by $g \mapsto g$, $h^{\vfi} \mapsto h$, which induces the derived map $\rho':[G,G^{\vfi}] \to G'$, $[g,h^{\vfi}] \mapsto [g,h]$, for all $g,h \in G$. In the notation of \cite[Section 2]{NR2}, let $\mu(G)$ denote the kernel of $\rho'$, a central subgroup of $\nu(G)$. In particular, $$\dfrac{[G,G^{\vfi}]}{\mu(G)} \cong G'.$$

The next lemma is a particular case of \cite[Theorem 3.3]{NR1}.

\begin{lem} \label{thm.Rocco}
Let $G$ be a group. Then the derived subgroup $$\nu(G)' = ([G,G^{\vfi}] \cdot G') \cdot (G')^{\vfi},$$ where ``$\cdot$'' denotes an internal semi-direct product.
\end{lem}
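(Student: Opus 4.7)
The plan is to reduce the problem to three observations: identify a generating set for $\nu(G)'$, show that the three candidate subgroups $[G,G^{\vfi}]$, $G'$, and $(G')^{\vfi}$ sit nicely as a product inside $\nu(G)$, and then inherit the semidirect product structure from the ambient decomposition $\nu(G) = ([G,G^{\vfi}] \cdot G) \cdot G^{\vfi}$ given in \eqref{eq:decomposition}.

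First, since $\nu(G) = \langle G, G^{\vfi} \rangle$, the derived subgroup $\nu(G)'$ is generated by commutators $[x,y]$ with $x,y \in G \cup G^{\vfi}$. These fall into three families: commutators $[g_1,g_2]$ with $g_i \in G$, which lie in $G'$; commutators $[h_1^{\vfi},h_2^{\vfi}] = [h_1,h_2]^{\vfi}$, which lie in $(G')^{\vfi}$ since $\vfi$ is an isomorphism onto $G^{\vfi}$; and mixed commutators $[g,h^{\vfi}]$, which by definition generate $[G,G^{\vfi}]$. Hence $\nu(G)'$ is generated by $G' \cup (G')^{\vfi} \cup [G,G^{\vfi}]$, giving the inclusion $\nu(G)' \subseteq [G,G^{\vfi}] \cdot G' \cdot (G')^{\vfi}$ once the right-hand side is shown to be a subgroup.

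Next I would verify the subgroup and semidirect-product claims in two stages. Because $[G,G^{\vfi}]$ is normal in $\nu(G)$, it is in particular normalized by $G'$, so $[G,G^{\vfi}] \cdot G'$ is a subgroup, and the decomposition \eqref{eq:decomposition} gives $[G,G^{\vfi}] \cap G = 1$, hence $[G,G^{\vfi}] \cap G' = 1$. This yields the internal semidirect product $[G,G^{\vfi}] \cdot G'$. Then I would show that $(G')^{\vfi}$ normalizes this product: it normalizes $[G,G^{\vfi}]$ (again by normality in $\nu(G)$), and for $g' \in G'$, $h \in G$, one has $(g')^{h^{\vfi}} = g' \cdot [g',h^{\vfi}] \in G' \cdot [G,G^{\vfi}]$, so $(G')^{\vfi}$ sends $G'$ into $[G,G^{\vfi}] \cdot G'$. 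Triviality of the intersection $([G,G^{\vfi}] \cdot G') \cap (G')^{\vfi}$ again follows from $([G,G^{\vfi}] \cdot G) \cap G^{\vfi} = 1$ in \eqref{eq:decomposition}. This gives the full semidirect product $([G,G^{\vfi}] \cdot G') \cdot (G')^{\vfi}$ as a subgroup of $\nu(G)$.

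Finally, the reverse inclusion is automatic: $G' \subseteq \nu(G)'$, $(G')^{\vfi} \subseteq \nu(G)'$, and $[G,G^{\vfi}] \subseteq \nu(G)'$ since its generators are commutators in $\nu(G)$. Combining the two inclusions yields the asserted equality. The only nontrivial computational step is confirming that conjugation by $(G')^{\vfi}$ carries $G'$ into $G' \cdot [G,G^{\vfi}]$; this is where the defining relations of $\nu(G)$ enter, but it reduces immediately to the identity $(g')^{h^{\vfi}} = g' \cdot [g',h^{\vfi}]$, so no serious obstacle is expected.
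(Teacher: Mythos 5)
Your argument is correct, but note that the paper does not actually prove this lemma: it simply cites it as a special case of Theorem~3.3 of Rocco's 1991 paper \cite{NR1}, so your self-contained derivation is a genuinely different (and more elementary) route. What you do — read off the generators of $\nu(G)'$ from the three types of commutators, use the normality of $[G,G^{\vfi}]$ and the ambient decomposition \eqref{eq:decomposition} to get the subgroup structure and the trivial intersections, and verify $(g')^{h^{\vfi}} = g'[g',h^{\vfi}]$ to see that $(G')^{\vfi}$ normalizes $[G,G^{\vfi}]\cdot G'$ — is exactly the standard computation behind the cited result, and every step checks out. One point to tighten: the derived subgroup of a group generated by a set is in general only the \emph{normal closure} of the commutators of generators, so your opening claim that $\nu(G)'$ is generated by the $[x,y]$ with $x,y \in G \cup G^{\vfi}$ needs the extra observation that the product $P = [G,G^{\vfi}]\cdot G'\cdot (G')^{\vfi}$ is itself normal in $\nu(G)$; this follows from the same conjugation identities you already use (together with the symmetric computation $(h^{\vfi})^{g} = h^{\vfi}[h^{\vfi},g] \in (G')^{\vfi}[G,G^{\vfi}]$ for $h \in G'$, $g \in G$), or more slickly by passing to the quotient $\nu(G)/[G,G^{\vfi}] \cong G \times G^{\vfi}$, whose derived subgroup is visibly $\bar{G'} \times \overline{(G')^{\vfi}}$, and pulling back. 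With that remark added, your proof is complete.
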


\begin{lem} \label{lem.central}
Let $X$ be a normal subset of a group $G$. If $X$ is finite, then the subgroup $\langle X\rangle$ is central-by-finite.
\end{lem}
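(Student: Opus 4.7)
The plan is to exploit the conjugation action of $G$ on the finite normal set $X$ to produce a large centralizer inside $\langle X\rangle$, and identify it with the center.

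First, I would observe that because $X$ is a normal subset, conjugation by elements of $G$ permutes $X$, giving a homomorphism $\psi : G \to \mathrm{Sym}(X)$. Since $|X|$ is finite, the image is finite, so the kernel $C_G(X) = \{g \in G : x^g = x \text{ for all } x \in X\}$ has finite index in $G$, bounded by $|X|!$. Equivalently, one can argue directly: for each $x \in X$ the conjugacy class $x^G$ is contained in $X$, hence is finite, so $|G : C_G(x)| < \infty$, and $C_G(X)$ is the intersection of the finitely many subgroups $C_G(x)$ for $x \in X$, hence of finite index.

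Next, set $N = \langle X\rangle$ and consider $C_N(X) := N \cap C_G(X)$. Since $X$ generates $N$, an element that commutes with every member of $X$ commutes with every element of $N$, so $C_N(X) = Z(N)$. The standard index inequality
\[
[N : N \cap C_G(X)] = [N \cdot C_G(X) : C_G(X)] \leq [G : C_G(X)]
\]
then gives $[N : Z(N)] \leq |X|! < \infty$, so $N$ is central-by-finite, as required.

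The argument is almost entirely formal once one notices the action on $X$, so there is no real obstacle; the only point worth care is the identification $C_N(X) = Z(N)$, which uses that $X$ generates $N$ (and hence centralizing $X$ inside $N$ forces centralizing all of $N$). This lemma is essentially the classical observation that a finite normal subset generates a BFC-subgroup, packaged in the form needed later in the paper.
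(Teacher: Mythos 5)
Your proof is correct and follows essentially the same route as the paper: each $x\in X$ has conjugacy class contained in $X$, hence $C_G(x)$ has finite index, and the intersection $\bigcap_{x\in X}C_G(x)$ meets $\langle X\rangle$ in a central subgroup of finite index (Poincar\'e's lemma). Your extra observation that this intersection with $\langle X\rangle$ is exactly $Z(\langle X\rangle)$ is a harmless refinement; the paper only needs that it is central.
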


\begin{proof}
For every element $x \in X$, the conjugacy class $x^G$ has at most $\vert X \vert$ elements. It follows that the index $$[G:C_G(x)] \leqslant \vert X \vert,$$ for every $x \in X$. According to Poincar\'e's Lemma \cite[1.3.12]{Rob}, the index $[G: \cap_{x \in X} C_G(x)]$ is also finite. In particular, the subgroup  $\langle X \rangle$ is central-by-finite, since the subgroup $  \left( \cap_{x \in X} C_G(x) \right) \cap \langle X \rangle$ is central and has finite index in $\langle X \rangle$. The proof is complete.
\end{proof}

The following result is a consequence of \cite[Proposition 2.3]{BL}.

\begin{lem}  \label{ident}  Let $G$ and $H$  be groups acting compatibly on one other.  The following relations hold in $\eta(G,H)$, for all $g,x\in G$ and $h,y\in H$:

\begin{enumerate}
\item[(a)]  $[g,h]^{[x,y^{\varphi }]}=[g,h^{\varphi
}]^{x^{-1}x^{y}}=[g,h^{\varphi }]^{(y^{-x}y)^{\varphi }};$

\item[(b)] $[g^{-1}g^h,y^{\varphi }]=[g,h^{\varphi }]^{-1}[g,h^{\varphi }]^{y^{\varphi }}.$
\end{enumerate}
\end{lem}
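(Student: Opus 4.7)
The lemma is stated to be a consequence of Brown and Loday's Proposition~2.3 of \cite{BL}, which catalogues the algebraic identities holding in the non-abelian tensor product $G\otimes H$. Via the isomorphism $[G,H^{\varphi}]\cong G\otimes H$ sending $[g,h^{\varphi}]\mapsto g\otimes h$ (\cite[Proposition~2.2]{Nak}), these identities transfer directly to $\eta(G,H)$. My plan is to verify (a) and (b) by direct computation inside $\eta(G,H)$, using only the defining conjugation relations
\[
[g,h^{\varphi}]^{g_{1}}=[g^{g_{1}},(h^{g_{1}})^{\varphi}], \qquad [g,h^{\varphi}]^{h_{1}^{\varphi}}=[g^{h_{1}},(h^{h_{1}})^{\varphi}],
\]
the compatibility conditions (\ref{eq:0}), the fact that the actions involved are by automorphisms, and the standard commutator identities $[ab,c]=[a,c]^{b}[b,c]$ and $[a,bc]=[a,c][a,b]^{c}$.

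For part~(a), I would expand $[x,y^{\varphi}]=x^{-1}(y^{-1})^{\varphi}xy^{\varphi}$ and conjugate $[g,h^{\varphi}]$ by each of the four letters in succession, applying the appropriate defining relation at each step. The resulting tensor is $[\alpha(g),\alpha(h)^{\varphi}]$, with $\alpha(u)=\bigl(((u^{x^{-1}})^{y^{-1}})^{x}\bigr)^{y}$. Using only that the actions are automorphisms and compose associatively one verifies $\alpha(u)=u^{x^{-1}x^{y}}$, which together with the defining relation yields the first equality; invoking the compatibility relation (\ref{eq:0}) in the form $u^{(y^{g_{1}})}=((u^{g_{1}^{-1}})^{y})^{g_{1}}$ with $g_{1}=x$ rewrites the same $\alpha(u)$ as $u^{y^{-x}y}$, producing the second equality.

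For part~(b), I would first apply $[ab,c]=[a,c]^{b}[b,c]$ with $a=g^{-1}$ and $b=g^{h}$, together with $[g^{-1},y^{\varphi}]=([g,y^{\varphi}]^{g^{-1}})^{-1}$, to obtain
\[
[g^{-1}g^{h},y^{\varphi}] \;=\; [g,y^{\varphi}]^{-g^{-1}g^{h}}\cdot[g^{h},y^{\varphi}].
\]
Invoking part~(a) with $(x,y)=(g,h)$ to replace $[g,y^{\varphi}]^{g^{-1}g^{h}}$ by $[g,y^{\varphi}]^{[g,h^{\varphi}]}$, and using $[a,bc]=[a,c][a,b]^{c}$ together with the defining relation $[g,h^{\varphi}]^{y^{\varphi}}=[g^{y},(h^{y})^{\varphi}]$, the claim reduces to the single identity
\[
[g,(hy)^{\varphi}] \;=\; [g,h^{\varphi}]\cdot[g^{h},y^{\varphi}].
\]
This last identity is proved by expanding $[g,(hy)^{\varphi}]$ directly as a word and recognising $[g,h^{\varphi}]\cdot[g^{h^{\varphi}},y^{\varphi}]$, then applying part~(a) once more to the correction $g^{h^{\varphi}}=g\cdot[g,h^{\varphi}]$ via bilinearity to replace $[g^{h^{\varphi}},y^{\varphi}]$ by $[g^{h},y^{\varphi}]$.

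The main obstacle is part~(b): conjugation of $g\in G$ by $h^{\varphi}$ inside $\eta(G,H)$ does not coincide with the abstract action $g^{h}$, but differs from it precisely by the tensor $[g,h^{\varphi}]$, which makes the identity genuinely subtle. Part~(a) is exactly the tool that reconciles these two ``actions of $h$'' at the tensor level; the delicate bookkeeping of the various applications of bilinearity, the defining relations, and the compatibility~(\ref{eq:0}) is the technical heart of the argument.
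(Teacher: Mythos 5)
The paper offers no argument for this lemma beyond the citation of \cite[Proposition 2.3]{BL}, so a self-contained verification inside $\eta(G,H)$ is a legitimate alternative route. Your treatment of part (a) is essentially sound, up to one imprecision worth flagging: after the four-step conjugation you must identify \emph{both} components $\alpha(g)$ and $\alpha(h)$ with $g^{x^{-1}x^{y}}$ and $h^{x^{-1}x^{y}}$ (respectively with $g^{y^{-x}y}$ and $h^{y^{-x}y}$), and in each of the two equalities one of the two components requires the compatibility condition (\ref{eq:0}) while the other follows from the automorphism property alone; it is not the case that the first equality uses no compatibility.

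Part (b), however, is circular at the last step. You reduce the claim to $[g^{h^{\varphi}},y^{\varphi}]=[g^{h},y^{\varphi}]$ and propose to prove this by writing $g^{h^{\varphi}}=g\,[g,h^{\varphi}]$, expanding, and invoking part (a). But the expansion gives
\[
[g\,[g,h^{\varphi}],y^{\varphi}]=[g,y^{\varphi}]^{[g,h^{\varphi}]}\,\bigl[[g,h^{\varphi}],y^{\varphi}\bigr],
\qquad
[g^{h},y^{\varphi}]=[g\,(g^{-1}g^{h}),y^{\varphi}]=[g,y^{\varphi}]^{g^{-1}g^{h}}\,[g^{-1}g^{h},y^{\varphi}],
\]
and part (a) identifies the two first factors; hence your claimed replacement is \emph{equivalent} to $\bigl[[g,h^{\varphi}],y^{\varphi}\bigr]=[g^{-1}g^{h},y^{\varphi}]$, which is exactly statement (b), since $\bigl[[g,h^{\varphi}],y^{\varphi}\bigr]=[g,h^{\varphi}]^{-1}[g,h^{\varphi}]^{y^{\varphi}}$. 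Part (a) only controls conjugation of a tensor \emph{by} a tensor; it says nothing about the commutator of a tensor \emph{with} $y^{\varphi}$, so it cannot break this cycle. The missing ingredient is the identity $[g^{h^{\varphi}},y^{\varphi}]=[g^{h},y^{\varphi}]$ obtained \emph{directly} from the defining relations: apply $[g,k^{\varphi}]^{h^{\varphi}}=[g^{h},(k^{h})^{\varphi}]$ with $k=y^{h^{-1}}$, note that $(y^{h^{-1}})^{\varphi}=(y^{\varphi})^{(h^{\varphi})^{-1}}$, and use the elementary identity $[a,b^{c^{-1}}]^{c}=[a^{c},b]$ to conclude $[g^{h},y^{\varphi}]=[g,(y^{h^{-1}})^{\varphi}]^{h^{\varphi}}=[g^{h^{\varphi}},y^{\varphi}]$. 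With that in hand, your reduction (or, more directly, $[g,h^{\varphi}]^{-1}[g,h^{\varphi}]^{y^{\varphi}}=[g^{-1}g^{h^{\varphi}},y^{\varphi}]$ followed by the same two-factor comparison) completes part (b).
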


\section{Proofs of the main results}

\begin{thmA}
Let $G$ and $H$ be groups that act compatibly on each other and suppose that $N$ and $K$ are subgroups of $G$ and $H$, respectively, such that $N$ is $K$-invariant and $K$ is $N$-invariant. If the set $T_{\otimes}(N,K)$ is finite, then so is the subgroup $[N,K^{\varphi}]$ of $\eta (G,H)$. In particular, if $T_{\otimes}(G,H)$ is finite, then so is the non-abelian tensor product $G \otimes H$.
\end{thmA}

\begin{proof} Since $N$ is $K$-invariant and $K$  is  $N$-invariant, from the defining relations of $\eta(G,H)$ it follows that $T_{\otimes}(N,K)$  is a normal subset of $\langle N,K^{\varphi}\rangle$. Thus,  the subgroup $[N,K^{\varphi}]$ is normal in $\langle N,K^{\varphi}\rangle$ and Lemma \ref{lem.central} gives us that it is also central-by-finite. Then, by Schur's Theorem \cite[14.1.4]{Rob}, the subgroup $[N,K^{\varphi}]^{\prime}$ is finite. Without loss of generality we may assume that $[N,K^{\varphi}]$ is abelian.

We claim that the subgroup $S=[N,K^{\varphi},K^{\varphi}]$ is normal in $\langle N,K^{\varphi} \rangle$ and finite. In fact, let $m\in [N,K^{\varphi}]$, $k,h \in K$ and $n\in N$. As $m$ and $[k^{-\varphi},n^{-1}]$ commute, we obtain
\[ \begin{array}{ccl}
[m,k^{\varphi}]^n &=& n^{-1}m^{-1}k^{-\varphi} m[k^{-\varphi},n^{-1}]nk^{\varphi} \\ &=&
n^{-1}m^{-1}k^{-\varphi} [k^{-\varphi},n^{-1}]mnk^{\varphi}\\
&=&[m^n,k^{\varphi}]
\end{array}
\]
and $[m,k^{\varphi}]^{h^{\varphi}}=[m^{h^{\varphi}},(k^h)^\varphi]$. Now, the normality of $S$ follows from the fact that $N$ and $K^{\varphi}$ normalize $[N,K^{\varphi}]$. We observe that the abelian group $S$ is generated by the set $X=\{[n,k^{\varphi}, h^{\varphi}]\, : \, n\in N, k,h\in K\}$, which is finite because $[n,k^{\varphi}, h^{\varphi}]=[n,k^{\varphi}]^{-1}[n^h,(k^h)^{\varphi}]
\in T^{-1}T$, for  all $n\in N,$ $ k,h\in K$, where $T=T_{\otimes}(N,K)$ and $T^{-1}=\{t^{-1}\,:\, t\in T\}$. Further, given $n\in N$ and $h,k\in K$, if $m=[n,h^{\varphi}]$ and $n_1=n^{-1}n^h$, then  $[m,k^{\varphi}]=[n,h^{\varphi},k^{\varphi}]=[n_1,k^{\varphi}]$ (by Lemma \ref{ident} (b)). Using these equalities, we get
\[
\begin{array}{ccl} [n,h^{\varphi},k^{\varphi}]^2 &=& [n_1,k^{\varphi}][m,k^{\varphi}] \\ &=&
m^{-1}[n_1,k^{\varphi}]k^{-\varphi}mk^{\varphi} \\&=&
m^{-1}[m,k^{\varphi}]k^{-\varphi}mk^{\varphi}\\ &=& [m^2,k^{\varphi}] \\ &=&
[m,k^{\varphi}]^m[m,k^{\varphi}]\\&=&
[n_1,k^{\varphi}]^m[n_1,k^{\varphi}]\\ &=&[{n_1}^2,k^{\varphi}] \qquad \mbox{(by Lemma \ref{ident} (a))}
\end{array}
\]
that is, $ [n,h^{\varphi},k^{\varphi}]^2 \in T$. We conclude that $S$ is a finitely generated abelian torsion group and, consequently, it is finite. Hence, we may assume that $K^{\varphi}$
centralizes $[N,K^{\varphi}]$. Since
\[[n,k^{\varphi}]^2= n^{-1}k^{-\varphi}nk^{\varphi}[n,k^{\varphi}]=
n^{-1}k^{-\varphi}n[n,k^{\varphi}]k^{\varphi}=[n,(k^2)^{\varphi}]\in T,\]
for all $n \in N$, $k\in K$, we obtain that  $[N,K^{\varphi}]$ is finite. The proof is complete.
\end{proof}

The remaining of this section will be devoted to obtain finiteness conditions for a group $G$ in terms of the orders of the tensors. Our proof involves looking at the description of the diagonal subgroup $\Delta(G) \leqslant [G,G^{\varphi}]$, where $G^{ab}$ is finitely generated. Such a description has previously been used by the third named author \cite{NR2}. See also \cite{BFM}.

The following is a key argument to obtain the finiteness of the abelianization $G^{ab}$ in terms of the periodicity of $\Delta(G)$.

\begin{prop} \label{prop.diagonal}
Let $G$ be a group with finitely generated abelianization. Suppose that the diagonal subgroup $\Delta(G)$ is periodic. Then the abelianization $G^{ab}$ is finite.
\end{prop}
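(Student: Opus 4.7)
The plan is to push the hypothesis down to the abelianization, where the diagonal subgroup becomes transparent. The canonical projection $\pi\colon G\to G^{ab}$ is compatible with the conjugation actions on both sides, and the induced conjugation action of $G^{ab}$ on itself is trivial; so by functoriality of the construction $\nu(\cdot)$ it yields a homomorphism $\nu(G)\to\nu(G^{ab})$. For an abelian group $A$ acting trivially on itself, the nonabelian tensor square $[A,A^{\vfi}]\le\nu(A)$ is canonically identified with the ordinary tensor product $A\otimes_{\mathbb Z}A$ via $[a,b^{\vfi}]\mapsto a\otimes b$ (by a classical theorem of Brown--Loday). I would use this to produce a surjective homomorphism
\[
\bar\pi\colon[G,G^{\vfi}]\longrightarrow G^{ab}\otimes_{\mathbb Z}G^{ab},\qquad [g,h^{\vfi}]\mapsto\bar g\otimes\bar h,
\]
which carries $\Delta(G)$ onto the subgroup $\nabla(G^{ab}):=\langle v\otimes v\mid v\in G^{ab}\rangle\le G^{ab}\otimes_{\mathbb Z}G^{ab}$.

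Next, since $\Delta(G)$ is periodic by hypothesis, so is its homomorphic image $\nabla(G^{ab})$. The remaining step is to argue that in a finitely generated abelian group $A$ of free rank $r\ge 1$ the subgroup $\nabla(A)\le A\otimes_{\mathbb Z}A$ necessarily contains an element of infinite order. This is elementary: writing $A\cong\mathbb Z^r\oplus T$ with $T$ finite, let $q\colon A\twoheadrightarrow\mathbb Z^r$ be the projection onto the free summand and pick $v\in A$ with $q(v)=e_1$. Under the induced map $q\otimes q\colon A\otimes A\to\mathbb Z^r\otimes\mathbb Z^r\cong\mathbb Z^{r^2}$ the element $v\otimes v\in\nabla(A)$ is sent to the basis vector $e_1\otimes e_1$ of a torsion-free group, so $v\otimes v$ itself must have infinite order.

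Applied to $A=G^{ab}$, the periodicity of $\nabla(G^{ab})$ forces $r=0$, and then $G^{ab}$, being a finitely generated torsion abelian group, is finite. The only nonformal step in this program is the construction of the surjection $\bar\pi$: one must check that the projection $G\to G^{ab}$ is genuinely compatibility-preserving and that the defining relations of $\eta(G^{ab},G^{ab})$ with trivial actions collapse to the bilinearity relations of the ordinary tensor product. After that, the argument is purely linear algebra over $\mathbb Z$, and no genuine obstacle remains.
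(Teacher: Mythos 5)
Your proposal is correct and follows essentially the same route as the paper: both pass to the epimorphism $\Delta(G)\twoheadrightarrow\Delta(G^{ab})$ induced by the abelianization map and then show that a nontrivial free summand of $G^{ab}$ would force an element of infinite order in the image, contradicting periodicity. The only difference is that where the paper cites the structural decomposition of $\Delta(G^{ab})$ from \cite[Remark 5]{NR2}, you identify $[G^{ab},(G^{ab})^{\varphi}]$ with $G^{ab}\otimes_{\mathbb{Z}}G^{ab}$ and detect the infinite-order element $v\otimes v$ by hand, which is a perfectly valid, self-contained substitute.
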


\begin{proof}
As $G^{ab}$ is a finitely generated abelian group we have $$G^{ab} = T \times F,$$ where $T$ is the torsion part and $F$ the free part of $G^{ab}$ (cf. \cite[4.2.10]{Rob}). From \cite[Remark 5]{NR2} we conclude that  $\Delta(G^{ab})$ is isomorphic to $$\Delta(T) \times \Delta(F) \times (T \otimes_{\mathbb{Z}} F),$$ where $T \otimes_{\mathbb{Z}} F$ is the usual tensor product of $\mathbb{Z}$-modules. In particular, the free part of $\Delta(G^{ab})$ is precisely $\Delta(F)$. Now, the canonical projection $G \twoheadrightarrow{G^{ab}}$ induces an epimorphism $q: \Delta(G) \twoheadrightarrow {\Delta(G^{ab})}$. Since $\Delta(G)$ is periodic, it follows that $\Delta(G^{ab})$ is also periodic. Consequently, $F$ is trivial and thus $G^{ab}$ is periodic and, consequently, finite. The proof is complete.
\end{proof}

The proof of Theorem B is now easy to carry out.

\begin{proof}[Proof of Theorem B] $(a) \Rightarrow (b).$  By \cite[Theorem 1]{M}, the non-abelian tensor square $[G,G^{\varphi}]$ is locally finite. \\

$(b) \Rightarrow (c).$ Note that $G'$ is isomorphic to the factor group $[G,G^{\varphi}]/\mu(G)$ and $\Delta(G)$ is a subgroup of $[G,G^{\varphi}]$. Since $[G,G^{\varphi}]$ is locally finite, it follows that the derived subgroup $G'$ and $\Delta(G)$ are locally finite. \\

$(c) \Rightarrow (a).$ By Proposition \ref{prop.diagonal}, $G^{ab}$ is finite. Since $G'$ is locally finite, it follows that $G$ is locally finite (Schmidt, \cite[14.3.1]{Rob}). The proof is complete.
\end{proof}

\begin{rem} \label{weak}
Theorem B may be summarized by saying that if $G$ is a group with finitely generated abelianization $G^{ab}$, such that the diagonal subgroup $\Delta(G)$ is periodic and the derived subgroup $G'$ is locally finite, then $G$ is locally finite. In a certain sense, these conditions cannot be weakened.
\begin{itemize}
\item[$(R_1)$] Note that the local finiteness of the subgroup $\Delta(G)$ alone does not implies the finiteness of the group $G$. For instance, in \cite[Theorem 22]{BM} Blyth and Morse proved that the non-abelian tensor square of the infinite dihedral is  $$[D_{\infty}, D_{\infty}^{\varphi}] \cong C_2 \times C_2 \times C_2 \times C_{\infty},$$ where $D_{\infty} = \langle a,b\mid  a^2=1, a^b = a^{-1}\rangle$ and $\Delta(D_{\infty}) \cong C_2 \times C_2 \times C_2$. In particular, $D_{\infty}$ is finitely generated and the subgroup $\Delta(D_{\infty})$ is finite. However, $D_{\infty}$ is not locally finite.
\item[$(R_2)$] It is clear that the local finiteness of the derived subgroup $G'$ alone does not imply the local finiteness of $G$.
\end{itemize}
\end{rem}

Combining Theorem A and Proposition \ref{prop.diagonal}, one obtains

\begin{cor} \label{c1}
Let $G$ be a group with finitely generated abelianization. Then the following assertions are equivalent.
\begin{itemize}
 \item[$(a)$] There exists only finitely many tensors in $[G,G^{\varphi}]$;
 \item[$(b)$] The non-abelian tensor square $[G,G^{\varphi}]$ is finite;
 \item[$(c)$] The group $\nu(G)$ is finite.
\end{itemize}
\end{cor}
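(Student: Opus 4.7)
The plan is to establish $(a) \Leftrightarrow (b)$ and $(b) \Leftrightarrow (c)$ separately, since the implications $(b) \Rightarrow (a)$ and $(c) \Rightarrow (b)$ are immediate from the inclusions $T_{\otimes}(G) \subseteq [G,G^{\varphi}] \leqslant \nu(G)$. The substantive work thus lies in $(a) \Rightarrow (b)$ and $(b) \Rightarrow (c)$.

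For $(a) \Rightarrow (b)$ I will simply invoke Theorem A in the special case $G=H$ with all actions by conjugation; it converts finiteness of $T_{\otimes}(G)$ directly into finiteness of $[G,G^{\varphi}]$, with no further hypothesis needed.

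For $(b) \Rightarrow (c)$ I will start from the finiteness of $[G,G^{\varphi}]$ and observe first that $\Delta(G)\leqslant [G,G^{\varphi}]$ is finite, hence periodic. Proposition \ref{prop.diagonal}, which uses the finite-generation hypothesis on $G^{ab}$, then yields that $G^{ab}$ is finite. Simultaneously, $G'\cong [G,G^{\varphi}]/\mu(G)$ is finite as a quotient of a finite group, so $G$ itself is finite. The semidirect decomposition $\nu(G) = ([G,G^{\varphi}] \cdot G) \cdot G^{\varphi}$ from \eqref{eq:decomposition} then exhibits $\nu(G)$ as an extension of finite groups, giving $(c)$.

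I do not anticipate any genuine obstacle: every tool—Theorem A, the isomorphism $[G,G^{\varphi}]/\mu(G)\cong G'$, Proposition \ref{prop.diagonal}, and the structural decomposition of $\nu(G)$—has already been developed in the preceding sections. The one point to flag is where the standing hypothesis that $G^{ab}$ is finitely generated is really used: it enters only in the step $(b) \Rightarrow (c)$ via Proposition \ref{prop.diagonal}, and without it the periodicity of $\Delta(G)$ alone would not force $G^{ab}$, and hence $G$, to be finite.
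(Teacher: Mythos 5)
Your proposal is correct and follows essentially the same route as the paper: Theorem A for $(a)\Rightarrow(b)$, and for $(b)\Rightarrow(c)$ the finiteness of $\Delta(G)$ combined with Proposition \ref{prop.diagonal} and $G'\cong[G,G^{\varphi}]/\mu(G)$ to get $G$ finite, then the decomposition of $\nu(G)$. The only cosmetic difference is that you close the cycle via the two trivial implications $(b)\Rightarrow(a)$ and $(c)\Rightarrow(b)$ where the paper uses the single trivial implication $(c)\Rightarrow(a)$.
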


\begin{proof} It is clear that $(c)$ implies $(a)$.

$(a) \Rightarrow (b).$ By Theorem A, the non-abelian tensor square $[G,G^{\varphi}]$ is finite.

$(b) \Rightarrow (c).$ Since $\nu(G) = ([G,G^{\varphi}]\cdot G) \cdot G^{\varphi}$, it is sufficient to prove that $G$ is finite. Since the non-abelian tensor square $[G,G^{\varphi}]$ is finite, it follows that the derived subgroup $G' \cong [G,G^{\varphi}]/\mu(G)$ and the diagonal subgroup $\Delta(G) \leqslant [G,G^{\varphi}]$ are finite. Proposition \ref{prop.diagonal} now shows that the abelianization $G^{ab}$ is finite. The proof is complete.
\end{proof}

Note that Corollary \ref{c1} no longer holds if $G$ is not assumed to be finitely generated.

\begin{rem} \label{prufer}
It is well know that the finiteness of the non-abelian tensor square $G \otimes G$, does not imply that $G$ is a finite group (and so, the group $\nu(G)$ cannot be finite). For instance, the Pr\"ufer group $C_{p^{\infty}}$ is an example of an infinite group such that $T_{\otimes}(C_{p^{\infty}}) = \{0\} = [C_{p^{\infty}}, (C_{p^{\infty}})^{\varphi}]$ and $\nu(C_{p^{\infty}}) = C_{p^{\infty}} \times C_{p^{\infty}}$. Actually,  this is the case for all torsion, divisible abelian groups.
\end{rem}

As usual, if $\pi$ is a nonempty set of primes, a $\pi$-number is a positive integer whose prime divisors belong to $\pi$. An element of a group is called a $\pi$-element if its order is a $\pi$-number. A periodic group $G$ is called a $\pi$-group if every element $g \in G$ is a $\pi$-element. For a periodic group $G$ we denote by $\pi(G)$ the set of all prime divisors of the orders of its elements. If a periodic group $G$ has $\pi(G)=\pi$, then we say that $G$ is a $\pi$-group. If $\pi = \{p\}$ for some prime $p$, it is customary to write {\it $p$-group} rather then {\it $\{p\}$-group}.

\begin{proof}[Proof of Theorem C] Recall that $G$ is a group with finitely generated abelianization and the non-abelian tensor square $[G,G^{\varphi}]$ is a $\pi$-group. We need to show that $G$ is also a $\pi$-group.

Since $G'$ is isomorphic to $[G,G^{\varphi}]/\mu(G)$, we deduce that $G'$ is a $\pi$-group. Now, we only need to show that $G^{ab}$ is a $\pi$-group. By Proposition \ref{prop.diagonal}, the abelianization $G^{ab}$ is finite. Suppose that $G^{ab} = C_{n_1} \times \cdots \times C_{n_r}$, where $C_{n_i}$ denotes the cyclic group of order $n_i$. According to \cite[Remark 5]{NR1}, $$\Delta(G^{ab}) \cong \displaystyle \prod_{i=1}^{r} C_{n_i} \times \displaystyle \prod_{j < k} C_{n_{j,k}},$$ where $n_{j,k} = \gcd(n_j,n_k)$. Consequently, $\pi(G^{ab}) = \pi(\Delta(G^{ab}))$. Now, the canonical projection $G \twoheadrightarrow{G^{ab}}$ induces an epimorphism $q: \Delta(G) \twoheadrightarrow {\Delta(G^{ab})}$. In particular, $\pi(\Delta(G^{ab})) \subseteq \pi(\Delta(G)) \subseteq \pi$. The result follows.
\end{proof}

\section{Applications}

A celebrated result due to E.\,I. Zel'manov \cite{ze1,ze2} refers to the positive solution of the {\it Restricted Burnside Problem}: every residually finite group of bounded exponent is locally finite. The methods used in the solution have shown very effective to treat other questions in group theory (see for instance \cite{BRMfM,BS,shu,shu2,w,wize}). Propositions \ref{prop.1} and \ref{prop.2} are in a certain way corollaries of results found in the above references.

\begin{prop} \label{prop.1}
Let $G$ be a residually finite group with finitely generated abelianization. Suppose that the non-abelian tensor square $[G,G^{\varphi}]$ has bounded exponent. Then $G$ is locally finite.
\end{prop}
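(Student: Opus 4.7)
The plan is to leverage the hypothesis that $[G,G^{\varphi}]$ has bounded exponent to force both the derived subgroup $G'$ and the diagonal subgroup $\Delta(G)$ to have bounded exponent, and then to combine Zel'manov's solution to the Restricted Burnside Problem with Proposition \ref{prop.diagonal} to conclude that $G$ is locally finite.

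First, since $G' \cong [G,G^{\varphi}]/\mu(G)$, the derived subgroup $G'$ is the image of a bounded exponent group and therefore inherits the bounded exponent. Similarly, $\Delta(G)$ is a subgroup of $[G,G^{\varphi}]$ and thus has bounded exponent; in particular, $\Delta(G)$ is periodic. Since $G^{ab}$ is assumed finitely generated, Proposition \ref{prop.diagonal} applies and gives that $G^{ab}$ is finite.

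Next, observe that $G'$ is residually finite, being a subgroup of the residually finite group $G$. Combined with the fact that $G'$ has bounded exponent, Zel'manov's positive solution of the Restricted Burnside Problem implies that $G'$ is locally finite. Finally, because $G/G' = G^{ab}$ is finite, the group $G$ is an extension of the locally finite group $G'$ by a finite group, and hence $G$ itself is locally finite (as in the proof of $(c) \Rightarrow (a)$ in Theorem B, invoking Schmidt's theorem \cite[14.3.1]{Rob}).

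The only non-routine input is the appeal to Zel'manov's theorem, which is of course deep; the remaining steps are essentially bookkeeping that reduces the hypothesis to the form in which Zel'manov's result can be applied. The potential subtlety is ensuring that the bounded exponent actually passes to $G'$ (via the quotient $[G,G^{\varphi}]/\mu(G)$) and not merely periodicity, but this is immediate since quotients preserve bounded exponent.
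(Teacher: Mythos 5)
Your proof is correct and follows essentially the same route as the paper: pass the bounded exponent to $G' \cong [G,G^{\varphi}]/\mu(G)$ and to $\Delta(G)$, apply Zel'manov's theorem to the residually finite bounded-exponent group $G'$, use Proposition \ref{prop.diagonal} to get $G^{ab}$ finite, and conclude by Schmidt's theorem.
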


\begin{proof} It is clear that the derived subgroup $G' \cong [G,G^{\varphi}]/\mu(G)$ is residually finite and has finite exponent. According to Zel'manov's result \cite{ze1,ze2}, the derived subgroup $G'$ is locally finite. By Schmidt's result \cite[14.3.1]{Rob}, it is sufficient to prove that $G^{ab}$ is (locally) finite. Note that the diagonal subgroup $\Delta(G)$ is an abelian group of bounded exponent. In particular, $\Delta(G)$ is periodic. Proposition \ref{prop.diagonal} now shows that $G^{ab}$ is finite, as well. The proof is complete.
\end{proof}

The next result is an immediate consequence of \cite[Theorem A]{BRMfM}.

\begin{lem} \label{lem.BR}
Let $p$ be a prime and $G$ a residually finite group satisfying a non-trivial identity. Suppose that for every tensor $\alpha$ there exists a $p$-power $q = q(\alpha)$ such that $\alpha^q=1$. Then $[G,G^{\varphi}]$ is locally finite.
\end{lem}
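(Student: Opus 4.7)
The plan is to exploit the canonical epimorphism $\rho' : [G,G^{\varphi}] \to G'$ from Section~2, whose kernel $\mu(G)$ lies in the centre of $\nu(G)$ and which sends each tensor $[g,h^{\varphi}]$ to the ordinary commutator $[g,h]$. First I would verify that the hypotheses of \cite[Theorem~A]{BRMfM} transfer to $G'$: as a subgroup of $G$, the derived subgroup $G'$ is residually finite and satisfies the same non-trivial identity as $G$. Moreover, the assumption that every tensor $\alpha = [g,h^{\varphi}]$ has some $p$-power order $q(\alpha)$ forces the commutator $\rho'(\alpha) = [g,h]$ to have order dividing $q(\alpha)$; hence every commutator in $G'$ is a $p$-element whose order is individually bounded by a $p$-power. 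That is exactly the situation in which \cite[Theorem~A]{BRMfM} yields local finiteness, and I would invoke it to conclude that $G'$ is locally finite.

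It then remains to lift local finiteness from $G'$ up to $[G,G^{\varphi}]$ through the central extension
\[
1 \longrightarrow \mu(G) \longrightarrow [G,G^{\varphi}] \xrightarrow{\;\rho'\;} G' \longrightarrow 1.
\]
Given any finitely many tensors $\alpha_1, \ldots, \alpha_n$, set $L = \langle \alpha_1, \ldots, \alpha_n \rangle$. The image $\rho'(L)$ is a finitely generated subgroup of the locally finite group $G'$, hence finite, so $L/(L \cap \mu(G))$ is finite. Since $\mu(G)$ is central in $\nu(G)$, the subgroup $L \cap \mu(G)$ is central in $L$, and therefore $L$ is central-by-finite. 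Schur's theorem \cite[14.1.4]{Rob} then gives that $L'$ is finite, and $L/L'$ is a finitely generated abelian group in which every generator $\alpha_i L'$ has $p$-power order by hypothesis; hence $L/L'$ is finite, and so is $L$. This proves that $[G,G^{\varphi}]$ is locally finite.

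The only genuine input is \cite[Theorem~A]{BRMfM}, which is a Zelmanov-type consequence of the positive solution of the Restricted Burnside Problem; once it supplies local finiteness of $G'$, the passage back to the tensor square is a routine application of Schur's theorem, essentially identical to the central-by-finite step already employed in the proof of Theorem~A of the present paper. The main conceptual obstacle is therefore not in the argument itself but rather in recognising that the tensor-level hypothesis on orders descends, via $\rho'$, to exactly the commutator-level hypothesis required by \cite{BRMfM}.
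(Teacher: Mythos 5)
The paper does not prove this lemma at all: it is stated as ``an immediate consequence of \cite[Theorem A]{BRMfM}'', and that cited theorem is precisely the tensor-level statement you are asked to prove (residually finite $+$ non-trivial identity $+$ every tensor of $p$-power order $\Rightarrow$ $[G,G^{\varphi}]$ locally finite). Your first step misreads that reference. You apply \cite[Theorem~A]{BRMfM} as though its hypothesis were ``every \emph{commutator} has $p$-power order'' and its conclusion were ``$G'$ is locally finite''. Neither is the case: applied to the group $G'$, its hypothesis would concern the tensors $[x,y^{\varphi}]$ with $x,y\in G'$ inside $\nu(G')$ (which you have not controlled), and its conclusion would be about $[G',(G')^{\varphi}]$, not about $G'$ itself. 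Taken at face value your invocation is either a misapplication or, if one grants you the theorem in its actual form, circular in effect --- you could simply apply it to $G$ and stop, which is exactly what the paper does. The statement you actually need in your first step (residually finite, non-trivial identity, all commutators of $p$-power order $\Rightarrow$ $G'$ locally finite) is Shumyatsky's theorem \cite{shu2}, a Zelmanov-type result that is itself the engine inside \cite{BRMfM}; that is the citation your argument requires.

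With that correction, the rest of your argument is sound and is essentially the internal proof of the cited theorem rather than a new route: the descent of the order hypothesis along $\rho'$ is correct, and the lifting step --- $L=\langle\alpha_1,\dots,\alpha_n\rangle$ has finite image in the locally finite group $G'$, hence is central-by-finite via $L\cap\mu(G)$, so $L'$ is finite by Schur, and $L/L'$ is a finitely generated abelian group generated by elements of $p$-power order, hence finite --- is complete. (One small point worth recording: local finiteness of $[G,G^{\varphi}]$ requires finiteness of \emph{every} finitely generated subgroup, but each such subgroup lies in one generated by finitely many tensors, since inverses of tensors are tensors, so your reduction is legitimate.) In short: replace the appeal to \cite[Theorem~A]{BRMfM} in the first step by \cite{shu2}, or else just cite \cite[Theorem~A]{BRMfM} once, applied to $G$, and delete everything else.
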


\begin{prop} \label{prop.2}
Let $p$ be a prime and $G$ a residually finite group satisfying a non-trivial identity. Suppose that the abelianization $G^{ab}$ is finitely generated and for every tensor $\alpha$ there exists a $p$-power $q=q(\alpha)$ such that $\alpha^q=1$. Then $G$ is locally finite.
\end{prop}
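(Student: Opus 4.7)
The plan is to mirror the strategy of Proposition \ref{prop.1}, but replacing the appeal to Zel'manov's theorem with an appeal to Lemma \ref{lem.BR}. First I would apply Lemma \ref{lem.BR} to the hypotheses at hand: $G$ is residually finite, satisfies a non-trivial identity, and every tensor has $p$-power order, so Lemma \ref{lem.BR} yields that the non-abelian tensor square $[G,G^{\varphi}]$ is locally finite.

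Next, I would harvest from this the two local/periodic properties needed to feed Theorem B (or, more directly, the machinery already used in the proof of Proposition \ref{prop.1}). Since $G' \cong [G,G^{\varphi}]/\mu(G)$ is a quotient of a locally finite group, it is locally finite. Since $\Delta(G) \leqslant [G,G^{\varphi}]$, it is also locally finite and, in particular, periodic. Now Proposition \ref{prop.diagonal} applies (using the standing assumption that $G^{ab}$ is finitely generated) to give that $G^{ab}$ is finite.

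Finally, I would close the argument exactly as in Proposition \ref{prop.1}: $G$ is an extension of the locally finite group $G'$ by the finite group $G^{ab}$, so Schmidt's theorem \cite[14.3.1]{Rob} yields that $G$ is locally finite. There is no real obstacle here once Lemma \ref{lem.BR} is invoked; the only subtle point is verifying that the hypotheses of Lemma \ref{lem.BR} (residual finiteness, non-trivial identity, $p$-power orders of tensors) match those stated in the proposition, but this is immediate from the statement.
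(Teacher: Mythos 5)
Your proposal is correct and follows the paper's argument: the paper invokes Lemma \ref{lem.BR} to conclude that $[G,G^{\varphi}]$ is locally finite and then cites Theorem B, whose implication $(b)\Rightarrow(a)$ is exactly the chain you spell out (local finiteness of $G'$, periodicity of $\Delta(G)$, Proposition \ref{prop.diagonal}, and Schmidt's theorem). You have merely unpacked the appeal to Theorem B; there is no substantive difference.
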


\begin{proof} By Lemma \ref{lem.BR}, the non-abelian tensor square $[G,G^{\varphi}]$ is locally finite. Now, Theorem B shows that $G$ is locally finite.
\end{proof}

Recall that a group is locally residually finite group if every finitely generated subgroup is residually finite. Interesting classes of groups (for instance, residually finite groups, linear groups, locally finite groups) are locally residually finite.

\begin{prop} \label{prop.locally}
Let $G$ be a locally residually finite group. Suppose that the set of tensors $T_{\otimes}(H)$ is finite in $\nu(H)$ for every proper finitely generated subgroup $H$ of $G$. Then $\nu(G)$ is locally finite.
\end{prop}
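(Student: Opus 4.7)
The plan is to show that every finitely generated subgroup $W$ of $\nu(G)$ is finite, which is the definition of local finiteness. Since $\nu(G)=\langle G,G^{\varphi}\rangle$, the finitely many generators of $W$ can be rewritten as words in finitely many elements of $G\cup G^{\varphi}$; collecting these I obtain a finite set $S\subseteq G$ with $W \leqslant \langle S,S^{\varphi}\rangle \leqslant \langle H,H^{\varphi}\rangle_{\nu(G)}$, where $H=\langle S\rangle$ is a finitely generated subgroup of $G$. This reduces the problem to showing that $\langle H,H^{\varphi}\rangle_{\nu(G)}$ is finite.

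If $H$ is a \emph{proper} subgroup of $G$, the hypothesis asserts that $T_{\otimes}(H)$ is finite in $\nu(H)$. Because $H$ is finitely generated, $H^{ab}$ is finitely generated as well, and Corollary~\ref{c1} then forces $\nu(H)$ to be finite. The defining relators of $\nu(H)$ are instances of those of $\nu(G)$, so the inclusion $H\hookrightarrow G$ induces a homomorphism $\nu(H)\to\nu(G)$ whose image coincides with $\langle H,H^{\varphi}\rangle_{\nu(G)}$; this image is finite, hence so is $W$.

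The main obstacle is the remaining case $H=G$, which can arise only if $G$ itself is finitely generated, and it is precisely here that the local residual finiteness is invoked. Applying the hypothesis together with Corollary~\ref{c1} to every proper finitely generated subgroup $K$ of $G$ yields that $\nu(K)$, and hence $K$, is finite. Now $G$ is finitely generated and locally residually finite, so $G$ itself is residually finite; if $G$ were infinite it would admit a proper normal subgroup $N$ of finite index, which is finitely generated by the Reidemeister--Schreier theorem and hence finite by the above, forcing $G$ itself to be finite and giving a contradiction. Thus $G$ is finite, and then $\nu(G)$ is finite either by Moravec's theorem or directly by Theorem~A (applied to $G=H$), which is in particular locally finite.
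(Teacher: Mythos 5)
Your proof is correct, and its skeleton in the finitely generated case coincides with the paper's: local residual finiteness gives residual finiteness, hence a proper finite-index (and therefore finitely generated) subgroup, which the hypothesis together with Theorem A and the Parvizi--Niroomand result (you package these as Corollary \ref{c1}) forces to be finite, so that $G$ itself is finite. Where you genuinely diverge is in the passage from $G$ to $\nu(G)$ in the non-finitely-generated case. The paper concludes only that $G$ is locally finite and then cites Moravec's theorem that $\nu$ of a locally finite group is locally finite. You instead work directly inside $\nu(G)$: every finitely generated $W\leqslant\nu(G)$ lies in $\langle H,H^{\varphi}\rangle_{\nu(G)}$ for some finitely generated $H\leqslant G$, and this subgroup is a homomorphic image of $\nu(H)$ because the defining relators of $\nu(H)$ are among those of $\nu(G)$; when $H$ is proper, Corollary \ref{c1} makes $\nu(H)$, hence that image, finite. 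This buys a more self-contained argument that avoids Moravec's result altogether (in the case $H=G$ you correctly fall back on Theorem A plus the decomposition $\nu(G)=([G,G^{\varphi}]\cdot G)\cdot G^{\varphi}$), at the cost of the routine verification that the inclusion $H\hookrightarrow G$ induces $\nu(H)\to\nu(G)$ with image $\langle H,H^{\varphi}\rangle$. The only cosmetic point, present in the paper's version as well, is that producing a proper finite-index subgroup of a residually finite group presupposes the group is nontrivial, a degenerate case that is finite anyway.
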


\begin{proof}
It will be convenient first to prove the theorem under the additional hypothesis that $G$ is finitely generated. By definition, $G$ is residually finite. It follows that $G$ contains a proper subgroup $H$ of finite index. Consequently, $H$ is also finitely generated \cite[1.6.11]{Rob}. Note that the set of tensors $T_{\otimes}(H)$ is finite. Applying Theorem A and \cite[Theorem 3.1]{NP} to $H$, we obtain that $H$ is finite. Hence $G$ is finite, too. In particular, $[G,G^{\varphi}]$ is finite (Brown and Loday, \cite{BL}). Since $\nu(G) = ([G,G^{\varphi}] \cdot G ) \cdot G^{\varphi}$, it follows that $\nu(G)$ is (locally) finite.

Now we drop the assumption that $G$ is finitely generated. Hence, by the previous paragraph, every proper finitely generated subgroup of $G$ is finite. Consequently, $G$ is locally finite. Thus, \cite[Corollary 5]{M} implies that the group $\nu(G)$ is locally finite, as well. The proof is complete.
\end{proof}

In the above theorem the locally residually finite hypothesis is essential. In fact, an important example in the context of periodic groups, due to A. Olshanskii, shows that for every sufficiently large prime $p$ ($p \geqslant 10^{75}$) there exists an infinite simple group $G$ in which every proper subgroup has order $p$ (see \cite{O} for more details). In particular, for every proper finitely generated subgroup $H$ the set of tensors $T_{\otimes}(H)$ is finite and $\nu(G)$ is not locally finite. Moreover, in Proposition \ref{prop.locally}, it is assumed that the set of tensors $T_{\otimes}(H)$ is finite in $\nu(H)$ for all proper finitely generated subgroup $H$. This condition seems very restrictive. But, in a certain way this restriction cannot be weakened. In \cite{G}, Golod proved that for every prime $p$ and a positive integer $d \geqslant 2$ there exists an infinite $d$-generated residually finite group in which every subgroup $H$ generated by at most $(d-1)$ elements is a finite $p$-group. It follows that the non-abelian tensor square $[H,H^{\varphi}]$ is  finite. In particular, the set of tensors $T_{\otimes}(H)$ is finite in $\nu(H)$ for every subgroup $H$  generated by at most $(d-1)$ elements; however, $\nu(G)$ cannot be locally finite.

\end{document}